\documentclass[12pt]{article}
 \usepackage[utf8]{inputenc}

      \usepackage{latexsym}
         \usepackage[reqno, namelimits, sumlimits]{amsmath}
         \usepackage{amssymb, amsfonts}
         \usepackage{amsthm}
           
 \newtheorem{theorem}{Theorem}[section]

 \newtheorem{pro}[theorem]{Proposition}

\author{ Gregory Seregin
}

\title{On potential Type II blowups for the Navier-Stokes equations  }

\author{G.~Seregin\footnote{University of Oxford, Mathematical Institute, OxPDE, Oxford, UK and St Petersburg Department of Steklov Mathematical Institute, RAS, Russia, email address: \texttt{seregin@maths.ox.ac.uk}}
}

\date{}

\begin{document}

\maketitle

\rightline{\it In Memory of Hermann Sohr}

\begin{abstract} In the present note, certain scenarios of potential Type II blowups of solutions to the Navier-Stokes equations are considered on the local level. They generalise particular scenarios described  in the previous papers of the author. The main features of  the approach, adopted in the note, are a zoom based on the Euler scaling and Liouville type theorems for the Euler equations
in classes motived by a particular scenario of the Type II blowup.
\end{abstract}

{\bf Keywords} Navier-Stokes equations,
regularity, blowups.  

{\bf Data availability statement}
Data sharing not applicable to this article as no datasets were generated or analysed during the current study.

{\bf Acknowledgement} The work is supported by Leverhulme Emeritus Fellowship 2023.

\setcounter{equation}{0}
\section{Introduction}

The main purpose of this note is to further develop  the analysis of potential Type II blowups of suitable weak solutions to the Navier-Stokes equations, as initiated in papers \cite{Seregin2023}-\cite{Seregin2025}.
To this end, let us recall the main  notions and  notation adopted in the aforementioned papers. We begin  with the definition 
of suitable weak solutions to the Navier-Stokes  equations in the canonical space-time domain $Q$, 
as introduced in the seminal paper \cite{CKN} and  subsequently refined  in  papers  \cite{Lin, LS1999}. A    pair of functions $v$ and $q$ is called a suitable weak solution to the Navier-Stokes equations in space-time domain $Q$ if it has the following properties:
\begin{itemize}
	\item $v\in L_\infty(-1,0;L_2(B)),\quad \nabla v\in L_2(Q),\quad q\in L_\frac 32(Q);$
	\item the Navier-Stokes equations
$$\partial_tv+v\cdot\nabla v-\Delta v+\nabla q=0,\quad{\rm div}\,v=0$$
are satisfied in $Q$ in the sense of distributions;
\item
for a.a. $t\in ]-1,0[$, the local energy inequality 
$$\int\limits_B\varphi(x,t)|v(x,t)|^2dx+2\int\limits^t_{-1}\int\limits_B\varphi|\nabla v|^2dxd\tau\leq 
$$
$$\leq \int\limits^t_{-1}\int\limits_B (|v|^2(\partial_t\varphi+\Delta\varphi)+v\cdot\nabla\varphi(|v|^2+2q))dxd\tau$$ holds
for all smooth non-negative functions $\varphi$ vanishing in a vicinity of the parabolic boundary of the cylinder $Q$. 
\end{itemize}
Here and in what follows, the standard notation is used:
$Q(r)=B(r)\times ]-r^2,0[$ is a parabolic cylinder 
and $B(r)$ is a spatial ball of radius $r$ centred at the origin $x=0$, $B=B(1)$, and $Q=Q(1)$.

According to the partial regularity theory for suitable weak solutions to the Navier-Stokes equations
 developed in the above mentioned Caffarelli-Kohn-Nirenberg paper \cite{CKN}, there is a universal constant $\varepsilon_*>0$ such that if 
\begin{equation}
	\label{partial regularity}
	g_0(v)=\min\{\liminf_{r\to 0}A(v,r),\liminf_{r\to 0}E(v,r),\liminf_{r\to 0}C(v,r)\}<\varepsilon_*,
	\end{equation}
then $z=(x,t)=0$ is a regular point of $v$, i.e., $v\in L_\infty(Q(r_*))$ for some 
  radius $r_*\in ]0,1[$. Here, the following quantities  have been used:
$$E(v,r)=\frac 1{r}\int\limits_{Q(r)}|\nabla v|^2dz, \quad A(v,r)=\sup\limits_{-r^2<t<0}\frac 1{r}\int\limits_{B(r)}|v(x,t)|^2dx,$$
$$C(v,r)=\frac 1{r^2}\int\limits_{Q(r)}|v|^3dz.$$
It remains unknown  whether or not $z=0$ is a regular point of $v$ for any pair $v$ and $q$, which is a suitable weak solution to the Navier-Stokes equation in $Q$. 

Assuming that there exists a suitable weak solution $v$ and $q$ in $Q$ with a singularity (blowup) at the origin $z=0$, let us distinguish between two potentially possible cases: $z=0$ is a Type I blowup if $g_0(v)<\infty$ and $z=0$ is Type II blowup if $g_0(v)=\infty$.

As in the  paper \cite{Seregin2023}, we are going to study one of the potentially possible  scenarios of Type II blowups generated  by the following two conditions. In the first one, it is supposed that there are  a number $\varepsilon_0>0$, a function $g: ]0,1[\mapsto ]0,\infty[$,
and a sequence $r_k\downarrow0$ as $k\to\infty$ such that 
\begin{equation}
\label{g to 0}
g(r_k)\to0,\qquad k\to\infty
\end{equation}
and
\begin{equation}
	\label{TypeII}
	g(r_k)M^{s,l}_{\kappa}(v,r_k)\geq \varepsilon_0, 
\end{equation}
for all natural $k$, where
$$M^{s,l}_\kappa(v,a)= \frac 1{a^\kappa}\int\limits_{-a^2} ^0\Big(\int\limits_{B(a)}|v|^sdx\Big)^\frac lsdt,$$
with numbers $s>1$ and $l>1$ satisfying restrictions:
\begin{equation}
	\label{restriction on s and l}
	l>\kappa:=l\Big(\frac 3s+\frac 2l-1\Big)>0.
\end{equation}

In papers \cite{Seregin2023}-\cite{Seregin2025}, the function  $g$ has a polynomial character: $g(r)=r^{\kappa(1-m_0)}$ with $0<m_0<1$.
In order to demonstrate  what kind of potential Type II blowup can be described by \eqref{g to 0} and \eqref{TypeII}, let us consider the following simple example:
\begin{equation}
	\label{direct ineq}
	|v(x,t)|\leq \frac {c\ln^\gamma(e/\sqrt{-t})}{\sqrt{-t}}
	\end{equation} 
for any $-1<t<0$ and for some $\gamma>0$.
Let further
$$g(r)=\frac 1{\ln^{\gamma l}(e/r)}.
$$ for some $1<l<2$.
Then, simple calculations 
show
$$\sup\limits_{0<r<1}g(r)M^{s,l}_\kappa(v,r)<\infty.$$
If we assume further that:
\begin{equation}
	\label{inverse ineq}
	|v(x,t)|\geq \frac {c_1\ln^\gamma(e/\sqrt{-t})}{\sqrt{-t}}
	\end{equation}
for all $-1<t<1$. Then, similar calculations give us:
$$\lim\limits_{r\to0}g(r)M^{s,l}_\kappa(v,r)\geq \varepsilon_0 $$
for some positive $\varepsilon_0$.

The second assumption in our scenario of potential Type II blowup is as follows: 
\begin{equation}
		\label{M1}
		M_1=\sup\limits_{0<r<1}\{A_{f}(v,r)+E_f(v,r)+D_f(q,r)\}<\infty,
	\end{equation}
	where the quantities
$$E_f(v,r)=\frac {f(r)}{r}\int\limits_{Q(r)}|\nabla v|^2dz, \quad A_{f}(v,r)=\sup\limits_{-r^2<t<0}\frac {f^2(r)}{r}\int\limits_{B(r)}|v(x,t)|^2dx,$$
$$D_f(q,r)=\frac {f^2(r)}{r^{2}}\int\limits_{Q(r)}|q|^\frac 32dz 
$$
have been involved. A function $f:]0,1]\mapsto]0,1]$ is supposed to be monotonically increasing so that 
$$\lim\limits_{\lambda\to 0}f(\lambda)=0, \qquad f(1)=1.$$

Again in the aforementioned papers of the author, the function $f$ has a particular form: $f(r)=r^{1-m}$ with a given number  $0<m<1$.

The interesting question is about a relationship between functions  $f$ and $g$ that does not exclude  Type II blowup scenario described by assumptions \eqref{TypeII} and \eqref{M1}.

\setcounter{equation}{0}

\section{
Scenario \eqref{TypeII} and \eqref{M1}    }

To state the main result of the section, let us introduce auxiliary numbers $p(\eta)$ and $q(\eta)$ in the following way
\begin{equation}
	\label{numbers}
\frac{1}{	p(\eta)}=\frac \eta 6+\frac {3(1-\eta)} {10}, \quad \frac 1{q(\eta)}=\frac \eta 2+\frac {3(1-\eta)} {10},
\end{equation}
where $0\leq \eta\leq 1$ is a parameter. It is easy to check 
that 
$$1>\frac 3{p(\eta)}+\frac 2{q(\eta)}-1=\frac 12>0$$
for all $0\leq \eta \leq 1$.

As to function $f$, the following  assumption has been added: for any number $0<a<\infty$, there is a limit
\begin{equation}
	\label{add assump}
	F(a)= \liminf\limits_{\lambda\to0}F_\lambda(a)>0,
	\end{equation}
  where
$$F_\lambda(a)=\frac {f(\lambda a)}{f(\lambda)}$$
provided $0<\lambda<1$ and $a\lambda\leq 1$.
\begin{theorem}
	\label{LPS1}
	Let  a pair $v$ and $q$ be a suitable weak solution to the Navier-Stokes equations in $Q$. It is supposed that this pair obeys  condition \eqref{M1}.

Assume further that, for some numbers $s$ and $l$, satisfying inequalities (\ref{restriction on s and l}), the additional restrictions
\begin{equation}
	\label{numberscond}s <p(\eta),\quad l<q(\eta)\end{equation}
hold with some parameter $0\leq\eta \leq 1$.

Suppose also that
\begin{equation}
\label{f and g}	
{\Big(f(\lambda)\Big)^{\frac l2(1+\frac 3s)-\frac 32(1-\frac s{p(\eta)})\frac ls}}\Big(g(\lambda \sqrt{f(\lambda)})\Big)^{-1}\to \infty\end{equation} 
as $\lambda\to0$.
	
	 Then 
\begin{equation}
	\label{no-typeII}
	\lim\limits_{r\to 0}g(r)M^{s,l}_{\kappa}(v,r)=0.\end{equation}	
	In other words, scenario \eqref{TypeII} and \eqref{M1} 
of potential Type II blowup is impossible.	
	\end{theorem}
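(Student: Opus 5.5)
The plan is to estimate $M^{s,l}_\kappa(v,r)$ directly in terms of the quantities controlled by $M_1$ in \eqref{M1}. The radius will be tied to a larger spatial scale $\lambda$ through $r=\lambda\sqrt{f(\lambda)}$, which is the Euler-type zoom: the natural velocity scale at level $\lambda$ is $1/f(\lambda)$ rather than $1$. Given $r\to0$, I choose $\lambda=\lambda(r)$ with $\lambda\sqrt{f(\lambda)}=r$; this is possible because $\lambda\mapsto\lambda\sqrt{f(\lambda)}$ is increasing and tends to $0$. The goal is to prove
$$g(r)M^{s,l}_\kappa(v,r)\le c(M_1)\,g(\lambda\sqrt{f(\lambda)})\Big(f(\lambda)\Big)^{-\frac l2(1+\frac 3s)+\frac 32(1-\frac s{p(\eta)})\frac ls},$$
so that \eqref{f and g} immediately yields \eqref{no-typeII}.

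\emph{Step 1 (energy bounds at scale $\lambda$).} From \eqref{M1} we have
$$\sup_{-\lambda^2<t<0}\int_{B(\lambda)}|v|^2dx\le M_1\lambda f^{-2}(\lambda),\qquad \int_{Q(\lambda)}|\nabla v|^2dz\le M_1\lambda f^{-1}(\lambda).$$
These hold in particular on the thin cylinder $B(\lambda)\times]-r^2,0[$, whose time length is $r^2=\lambda^2f(\lambda)\le\lambda^2$.

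\emph{Step 2 (interpolation).} On each time slice I use the Gagliardo--Nirenberg inequality on $B(\lambda)$, with the lower-order term $\lambda^{-1}\|v\|_{L_2(B(\lambda))}$ included. This interpolates between $L_\infty L_2$ and $L_2L_6$; with the weights fixed by $\eta$ in \eqref{numbers}, it bounds $\|v\|_{L_{q(\eta)}(-r^2,0;L_{p(\eta)}(B(\lambda)))}$. Here $\eta=0$ is the $L_{10/3}$ endpoint and $\eta=1$ is the $L_2L_6$ endpoint. Since $s<p(\eta)$ and $l<q(\eta)$ by \eqref{numberscond}, I then apply H\"older twice. In space I pass from $B(r)$ to $B(\lambda)$, gaining $|B(r)|^{(1-s/p)l/s}\sim r^{3(1-s/p)l/s}$. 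In time I gain a factor $(r^2)^{1-l/q}$. The exact form of this H\"older gain is what produces the term $\frac32(1-\frac s{p(\eta)})\frac ls$ in the exponent of \eqref{f and g}.

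\emph{Step 3 (bookkeeping).} I insert the bounds of Step 1, divide by $r^\kappa$, and write every power of $r$ as a power of $\lambda$ times a power of $f(\lambda)$. The identity $\frac3{p(\eta)}+\frac2{q(\eta)}-1=\frac12$, together with the definition of $\kappa$, should make all pure powers of $\lambda$ cancel, because the problem is scale invariant. What remains is a pure power of $f(\lambda)$, which should match the exponent in \eqref{f and g}. If $f$ has to be compared at the two scales $\lambda$ and $r$ (for instance for the lower-order term in Gagliardo--Nirenberg, or if one prefers to work on $B(r)$ throughout), I will use \eqref{add assump} to replace $f(\lambda a)$ by $f(\lambda)$ up to a constant for bounded $a$.

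The main obstacle is Step 3. The exponent arithmetic has to close exactly, and it is not clear in advance that a direct estimate is sharp enough. If it does not close, the fallback is a blow-up argument by contradiction. Assume $g(r_k)M^{s,l}_\kappa(v,r_k)\ge\varepsilon_0$ and rescale by the Euler zoom
$$u^k(y,s)=\lambda_kf(\lambda_k)\,v(\lambda_ky,\lambda_k^2f(\lambda_k)s),$$
which gives Navier--Stokes with viscosity $f(\lambda_k)\to0$. The bounds from \eqref{M1} and \eqref{add assump} are then uniform in $k$. Compactness in $L_{l}(L_s)$ comes from the $\eta$-interpolated bound plus a time-derivative estimate that uses the pressure bound $D_f$. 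This gives a limit that solves the Euler equations in the corresponding energy class, and a Liouville-type theorem forces the limit to vanish. Finally, \eqref{f and g} is used to show that the rescaled functional stays bounded below along the sequence, which contradicts the vanishing of the limit.
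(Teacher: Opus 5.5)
Your direct estimate is correct, and it is essentially the paper's argument written in unscaled variables. The paper rescales by the Euler zoom with $r_k=\lambda_k\sqrt{f(\lambda_k)}$. It bounds $v^{\lambda_k}$ in $L_{q(\eta)}(L_{p(\eta)})$ on the unit cylinder, which is exactly your thin cylinder $B(\lambda)\times]-r^2,0[$, using only the $A_f$ and $E_f$ bounds at scale $\lambda$ with $F_\lambda(1)=1$. It then applies H\"older from $B(\sqrt{f(\lambda)})$ to $B(1)$ and states the resulting inequality as a contradiction.

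Your Step~3 closes exactly. The powers of $\lambda$ cancel and you get $M^{s,l}_\kappa(v,r)\le c(M_1)\,(f(\lambda))^{-\frac l2(1+\frac 3{p(\eta)})}$, and $\frac l2(1+\frac 3{p(\eta)})$ is precisely the exponent in \eqref{f and g} after simplification. So neither \eqref{add assump} nor the blow-up/Liouville fallback is needed. One tacit point, which the paper shares: solving $\lambda\sqrt{f(\lambda)}=r$ requires $f$ to be continuous, not merely increasing.
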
	
	\begin{proof}
Suppose that statement \eqref{no-typeII} is wrong. Then, there are a positive number $\varepsilon_0$ and a sequence $r_k\to 0$ as $k\to\infty$ such that
\begin{equation}
	\label{non-statementLPS}
g(r_k)	M^{s,l}_{\kappa}(v,r_k)\geq \varepsilon_0>0
\end{equation}	
for all natural numbers $k$. 

Now, given function $f$, define  the following scaled variables and scaled functions 
\begin{equation}
	\label{Euler-scaling}
	v^{\lambda}(y,\tau)=\lambda f(\lambda) v(x,t),\qquad q^{\lambda}(y,\tau)=\lambda^{2} f^2(\lambda)q(x,t),\end{equation}
where 
$$x=\lambda y,\qquad t=\lambda^2f(\lambda)\tau.$$
The choice of $\lambda$ is as follows:
$$\lambda=\lambda_k, \quad r_k=\lambda_k\sqrt{f(\lambda_k)}.$$
Then, after the change of variables, we arrive at the important inequality: 
\begin{equation}
	\label{non-trivial-2}
 \int\limits_{-1}^0\Big(\int\limits_{B\Big(\sqrt{f(\lambda)}\Big)}|v^{\lambda}|^sdy\Big)^\frac lsd\tau=
 \lambda^{l-\frac {3l}s-2+\kappa}f^{l-1+\frac \kappa 2}(\lambda)
  M^{s,l}_{\kappa}(v,r_k)\geq %
	\end{equation}
$$\geq  {\Big(f(\lambda)\Big)^{\frac l2(1+\frac 3s)}}\Big(g(r_k)\Big)^{-1}\varepsilon_0.$$

Next, setting $r=a\lambda $  for $a<1/\lambda$, we find, as a result of the change of variables, the following estimate:
$$M_1\geq E_f(v,r)=\frac {f(r)}{r}\int\limits^0_{-r^2}\int\limits_{B(r)}|\nabla v|^2dxdt=$$
$$=\frac {f(r)}{r}\frac {\lambda^5f(\lambda)}{\lambda^4f^2(\lambda)}\int\limits^0_{-(\frac r\lambda)^2/f(\lambda)}
\int\limits_{B(\frac r\lambda)}|\nabla v^{\lambda}|^2dyds\geq$$$$\geq\frac {f(a\lambda)}{f(\lambda)}\frac {1}{a}\int\limits^0_{-a^2}\int\limits_{B(2a)}|\nabla v^{\lambda}|^2dyds=E_{F_\lambda}(v^{\lambda},a).$$ Arguing 
further in the same way, we find
\begin{equation}
	\label{M1scaling}
	M_1\geq \sup\limits_{0<a<1/\lambda}\{A_{F_\lambda}(v^{\lambda},a)+E_{F_\lambda}(v^{\lambda},a)+D_{F_\lambda}(q^{\lambda},a)\}.\end{equation}

Known multiplicative inequalities and arguments, taken from  the paper \cite{Seregin2023}, lead to the existence of subsequences
of $v^{\lambda_k}$ and $q^{\lambda_k,}$ such that:
\begin{itemize}
	\item $v^{\lambda_k}\to u$ in $L_{3\nu}(Q(a))$;
	\item  $v^{\lambda_k} {\stackrel{*}\rightharpoonup} u$ in $L_{2,\infty}(Q(a))$	\item $\nabla v^{\lambda_k}\rightharpoonup \nabla u$ in $L_2(Q(a))$;
	\end{itemize}
for all $a>0$ and for all $1\leq \nu<\frac {10}9$. Moreover, the limit functions $u$ and $p$ possess the properties listed below:
$$	M_1\geq \sup\limits_{a>0}\{A_{F}(u,a)+E_F(u,a)+D_F(p,a)\}$$
	and
$$\partial_tu+u\cdot\nabla u=-\nabla p,\qquad {\rm div}\,u=0$$
in $Q_-$.

Repeating arguments of paper \cite{Seregin2023} that essentially based  on application of H\"older inequality together with certain multiplicative inequalities, we obtain from \eqref{M1scaling} the bound $$\|v^{\lambda_k}\|_{ p(\eta), q(\eta),Q}\leq \|v^{\lambda_k}\|_{6,2,Q}^\eta\|v^{\lambda_k}\|_{10/3,Q}^ {1-\eta}\leq c_1<\infty.$$  It is  valid 
all natural number $k$.

 Making use of  H\"older inequality one more time, we find from \eqref{non-trivial-2} that:
$$\Big(f(\lambda_k)\Big)^{\frac l2(1+\frac 3s)}\Big(g(r_k)\Big)^{-1}\varepsilon_0\leq $$$$\leq c\Big(f(\lambda_k)\Big)^{\frac 32(1-\frac s{ p(\eta)})\frac ls}\|v^{\lambda_k}\|^l_{ p(\eta), q(\eta),B\Big (\sqrt{f(\lambda_k)}\Big)\times]-1,0[}\leq $$
$$\leq c \Big(f(\lambda_k)\Big)^{\frac 32(1-\frac s{ p(\eta)})\frac ls}c_1^l$$
and thus
$$\Big(f(\lambda_k)\Big)^{\frac l2(1+\frac 3s)-\frac 32(1-\frac s{p(\eta)})\frac ls}\Big(g(r_k)\Big)^{-1}\varepsilon_0\leq cc^l_1.$$
Passing to the limit as $k\to\infty$, one can conclude that number $\varepsilon_0$ must vanish. This is a contradiction.
\end{proof}

Consider an example of application of Theorem \ref{LPS1}. To this end, let us introduce the functions
\begin{equation}\label{example}
	f(\lambda)=\frac {1}{\ln^\gamma{(e/\lambda)}}
\end{equation} 
 and 
$$g(r)=\frac {1}{\ln^\nu{(e/r)}}$$
for some positive parameters $\gamma$ and $\nu$.
Direct calculations show that
if 
$$\nu>\gamma\frac l2\Big(1+\frac 3{p(\nu)}\Big),$$
then 
$$\lim\limits_{r\to0}g(r)M^{s,l}_\kappa(v,r)=0$$
and moreover
$$\lim\limits_{r\to0}r^{\kappa(1-m_0)}M^{s,l}_\kappa(v,r)=0$$
for any $0<m_0<1$.

\setcounter{equation}{0}

\section{New Version of Condition
\eqref{TypeII}   }
Here, we replace assumption \eqref{TypeII}  with  another one: there are a positive number $\varepsilon_0$ and a sequence of $r_k\downarrow 0$ as $k\to\infty$ such that 
\begin{equation}	\label{one more}
g(r_k)\overline M^{s,l}_\kappa(v,r_k)\geq \varepsilon_0
\end{equation}
for all $k=1,2,,,$ where  
$$\overline M^{s,l}_\kappa(v,r)=\frac 1{r^\kappa}\int\limits^0_{-r^2f(r)}\Big(\int\limits_{B(r)}|v|^sdx\Big)^\frac lsdt $$ and
$$g(r)=(f(r))^{l-1}r^{l-2-3\frac ls+\kappa}=(f(r))^{l-1}.$$
Obviously, since
\begin{equation}
	\label{imply Type II}
M^{s,l}_{\kappa}(v,r)\geq \overline{M}^{s,l}_{\kappa}(v,r),\end{equation}
condition \eqref{one more} describes  a potential Type II blowup. 
Necessary condition of the fact that such a  Type II blowup might happen follows from \eqref{f and g}, see Theorem \ref{LPS1}: 
\begin{equation}
	\label{new condition}
\limsup\limits_{\lambda\to 0}\frac {(f(\lambda))^{\frac l2(1+\frac3{p(\eta)})}}{(f(\lambda\sqrt{f(\lambda)}))^{l-1}}<\infty.
\end{equation}

The scaling used in the proof of Theorem \ref{LPS1} leads to the following important identity
\begin{equation}
	\label{Non-trivial-identity}
	g(r_k)\overline M^{s,l}_\kappa(v,r_k)	=
	\int^0_{-1}\Big(\int\limits_B|v^{\lambda}|^sdy\Big)^\frac ls d\tau	\geq \varepsilon_0>0	\end{equation}
with $\lambda=\lambda_k=r_k$. Then, repeating the same arguments as in the proof of Proposition 1.2 in \cite{Seregin2023}, we prove  the following theorem.
\begin{theorem}
	\label{euler}
	Suppose that a pair $v$ and $q$ is a suitable weak solution to the Navier-Stokes equations in the unit space-time cylinder $Q$. Assume $v$ and $q$ satisfy the conditions \eqref{M1}, \eqref{one more}, and \eqref{new condition}.

Then, there are two functions $u$ and $p$ defined in $Q_-=\mathbb R^3\times]-\infty,0[$, with the following properties:
	$$\sup\limits_{a>0}\Big[\sup\limits_{-a^2<\tau<0}\frac {F^2(a)}{a}\int\limits_{B(a)}|u(y,\tau)|^2dy+\frac {F^2(a)}{a^{2}}\int\limits_{Q(a)}|p|^\frac 32dyd\tau+$$
 \begin{equation}
	\label{basicestimates}
+\frac {F(a)}{a}\int\limits_{Q(a)}|\nabla u|^2dyd\tau\Big]\leq c<\infty;\end{equation}
\begin{equation}
	\label{Euler}\partial_\tau u+u\cdot\nabla u+\nabla p=0, \quad{\rm div}\,u=0
\end{equation}in $Q_-=\mathbb R^3\times ]-\infty,0[$ in the sense of distributions;
  
 for a.a. $\tau_0\in ]-\infty,0[$, the local energy inequality
$$\int\limits_{\mathbb R^3}|u(y,\tau_0)|^2\varphi(y,\tau_0)dy\leq$$
\begin{equation}
	\label{enerylocal}
\leq \int\limits_{-\infty}^{\tau_0}\int\limits_{\mathbb R^3} \Big(|u|^2\partial_\tau\varphi+u\cdot\nabla\varphi(|u|^2+2
p)
\Big)dyd
\tau
\end{equation}
holds for non-negative $\varphi\in C^\infty_0(\mathbb R^3\times \mathbb R)$;
 
 the function $u$ is non-trivial in the sense
 \begin{equation}
	\label{nontrivial}
M^{s,l}_{\kappa}(u,1)
\geq \varepsilon_0/2.	
\end{equation} 
\end{theorem}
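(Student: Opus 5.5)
We argue along the lines of the proof of Theorem \ref{LPS1}, but with the choice $\lambda=\lambda_k=r_k$ in the Euler scaling \eqref{Euler-scaling}. Then the rescaled cylinder $Q(r_k)$ in the variables $(y,\tau)$ becomes $B(1)\times]-1/f(\lambda_k),0[$, which exhausts $Q_-$ as $k\to\infty$. The identity \eqref{Non-trivial-identity} follows by the change of variables $x=\lambda y$, $t=\lambda^2 f(\lambda)\tau$. Here $g(r)=f^{l-1}(r)$ absorbs exactly the Jacobian and the factors $(\lambda f(\lambda))^l$; note that the time interval $]-r_k^2f(r_k),0[$ maps onto $]-1,0[$. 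The same change of variables as before gives the uniform bound \eqref{M1scaling} for $v^{\lambda_k},q^{\lambda_k}$ with the weight $F_{\lambda_k}$, for all $0<a<1/\lambda_k$. Since $F_\lambda(a)\geq F(a)/2>0$ for small $\lambda$ by \eqref{add assump}, for each fixed $a$ we get $k$-independent bounds:
\begin{itemize}
\item $v^{\lambda_k}$ in $L_{2,\infty}(Q(a))$;
\item $\nabla v^{\lambda_k}$ in $L_2(Q(a))$;
\item $q^{\lambda_k}$ in $L_{3/2}(Q(a))$.
\end{itemize}

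Next we extract limits. Rescaling the Navier--Stokes equations gives
$$\partial_\tau v^{\lambda}+v^\lambda\cdot\nabla v^\lambda-f(\lambda)\Delta v^\lambda+\nabla q^\lambda=0 .$$
The viscosity coefficient $f(\lambda_k)\to 0$, which is why the limit should solve the Euler system. From the equation we estimate $\partial_\tau v^{\lambda_k}$ in a negative Sobolev space, for instance $L_{3/2}(-a^2,0;(W^{1}_{3,0}(B(a))\cap\{{\rm div}=0\})')$. The Aubin--Lions lemma, combined with the multiplicative inequality interpolating $L_{2,\infty}$ and $L_2(W^1_2)$, then yields strong convergence in $L_{3\nu}(Q(a))$ for $\nu<10/9$, as in \cite{Seregin2023}. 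A diagonal argument over $a\to\infty$ produces $u$ and $p$ (weak limit of $q^{\lambda_k}$ in $L_{3/2}$) on $Q_-$. Lower semicontinuity of the norms, together with $\liminf F_{\lambda_k}(a)\geq F(a)$, gives \eqref{basicestimates}. Passing to the limit in the weak formulation gives \eqref{Euler}, since the term $f(\lambda_k)\nabla v^{\lambda_k}$ tends to zero in $L_2(Q(a))$.

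For the local energy inequality \eqref{enerylocal}, we rescale the local energy inequality for $v$. The terms $f(\lambda_k)\int|v^{\lambda_k}|^2\Delta\varphi$ and $f(\lambda_k)\int\varphi|\nabla v^{\lambda_k}|^2$ appear with the factor $f(\lambda_k)\to 0$; the second is nonnegative on the left-hand side and can be dropped. The cubic and pressure terms converge by the strong $L_3$ convergence of $v^{\lambda_k}$ and the weak $L_{3/2}$ convergence of $q^{\lambda_k}$. The left-hand side is handled for a.a. $\tau_0$ by the usual argument with weak convergence in $L_2$ at fixed times, obtained after a further subsequence and Fatou's lemma.

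The main obstacle is non-triviality \eqref{nontrivial}: we must pass to the limit in the left-hand side of \eqref{Non-trivial-identity}. Strong convergence is available only in $L_{3\nu}$, so we need the pair $(s,l)$ to lie strictly below the integrability uniformly controlled by the bounds, namely $L_{p(\eta),q(\eta)}$. This is where \eqref{numberscond} and \eqref{new condition} enter: as in the proof of Theorem \ref{LPS1}, they provide a uniform bound $\|v^{\lambda_k}\|_{p(\eta),q(\eta),Q}\leq c_1$ with $s<p(\eta)$, $l<q(\eta)$. Interpolating between this bound and the strong convergence in $L_{3\nu}(Q)$ (or $L_2(Q)$) gives $v^{\lambda_k}\to u$ in $L_{s,l}(Q)$. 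Hence
$$M^{s,l}_\kappa(u,1)=\lim_k\int_{-1}^0\Big(\int_B|v^{\lambda_k}|^s\Big)^{l/s}d\tau\geq\varepsilon_0>\varepsilon_0/2 .$$
I would check carefully that \eqref{new condition} is precisely what keeps the scaled $L_{p(\eta),q(\eta)}$ norms bounded.
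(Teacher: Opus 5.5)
Your proposal follows the paper's route and works. The ingredients match: Euler scaling with $\lambda_k=r_k$, the identity \eqref{Non-trivial-identity}, and the bounds \eqref{M1scaling} made uniform in $k$ via $F_{\lambda_k}(a)\geq F(a)/2$. Then compactness with a diagonal argument, and non-triviality by interpolating a uniform $L_{p(\eta),q(\eta)}$ bound against strong $L_{3\nu}$ convergence on $B\times]-1,0[$, using $s<p(\eta)$ and $l<q(\eta)$. Two points need correcting, though neither changes the structure.

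\textbf{The role of \eqref{new condition}.} The uniform bound $\|v^{\lambda_k}\|_{p(\eta),q(\eta),Q}\leq c_1$ does not come from \eqref{new condition}. It follows from \eqref{M1scaling} at $a=1$: there $F_\lambda(1)=1$, so $\|v^{\lambda_k}\|^2_{2,\infty,Q}$ and $\|\nabla v^{\lambda_k}\|^2_{2,Q}$ are bounded by $M_1$. One then combines $\|v\|_{6,2,Q}\leq c(\|\nabla v\|_{2,Q}+\|v\|_{2,\infty,Q})$, the multiplicative inequality for $\|v\|_{10/3,Q}$, and H\"older. Condition \eqref{numberscond} does enter at this step, but \eqref{new condition} is not used anywhere in the proof. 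It only records the necessary condition from Theorem \ref{LPS1} under which scenario \eqref{one more} is not already excluded.

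\textbf{The compactness step.} Aubin--Lions cannot be run with the dual of divergence-free test fields, as you wrote it. $L_2(B(a))$ does not embed injectively into that dual, because gradients of harmonic functions lie in the kernel. For instance, $w_k=\sin(k\tau)e_1$ is divergence free and bounded in $L_{2,\infty}(Q(a))$ with $\nabla w_k=0$. Moreover $\partial_\tau w_k$ vanishes on divergence-free test fields compactly supported in $B(a)$. Yet $w_k\rightharpoonup 0$ while $\|w_k\|_{2,Q(a)}\not\to 0$, so no subsequence converges strongly. The remedy is the uniform $L_{3/2}(Q(a))$ bound on $q^{\lambda_k}$, which you already have. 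Use it to estimate $\partial_\tau v^{\lambda_k}$ in $L_{3/2}(-a^2,0;(W^1_{3,0}(B(a)))')$ with the full dual. With that change the strong convergence in $L_{3\nu}(Q(a))$, and the rest of your argument, go through as described.
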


Coming back to our example \eqref{example}, we notice that
$$g(r)=\frac 1{(\ln(e/r))^{\gamma(l-1)}}.$$
Then verification of \eqref{new condition} can be reduced to the following inequality
$$l-1\leq \frac l2\Big(1+\frac 3{p(\eta)}\Big)$$
which is equivalent to 
$$\frac 12\leq \frac 1l+\frac 34-\frac{1}{q(\eta)}\leq \frac 34.$$
The latter means that there are no additional restrictions on $s$ and $l$. By the way,  in the above example:
$$F(a)=1.$$

Now, we would like to consider a more general example
\begin{equation}
	\label{general case}
	f(\lambda)=\frac {\lambda^{\alpha-1}}{\ln^\gamma(e/\lambda)}
\end{equation}
where $0<\lambda\leq 1$ and parameters $\alpha$ and $\gamma$ obey the restrictions:
\begin{equation}
	\label{restrictions}
	1<\alpha<2, \qquad 0<\gamma^2<\infty.
\end{equation}
In this case, 
$$F(a)=a^{\alpha-1}$$
and thus
$$\frac {F^2(a)}a=a^{2\alpha-3}.$$
From \eqref{basicestimates}, it follows that $u=0$ if $2\alpha-3>0$ and singularity \eqref{one more} cannot happen according to  Theorem \ref{euler}.
Hence,  again, we have to restrict ourselves to the case  
\begin{equation}
	\label{add restriction}
	2\alpha-3\leq 0
\end{equation}

So, now, the new condition \eqref{new condition} can be reduced to consideration of the following function
$$A(\lambda):=\lambda^{\kappa_1}\Big[\ln^\gamma(e/\lambda)\Big]^{\frac 12(l-1)(\alpha-1)-\frac l2(1+\frac3{p(\eta)})}\Big[\ln^\gamma(e/(\lambda\sqrt{f(\lambda)}))\Big]^{l-1}
$$
and its behaviour as $\lambda\to 0$. Here,
$$\kappa_1=(\alpha-1)\Big[\frac l2\Big (1+\frac 3{p(\eta)}\Big )-\frac {\alpha+1}2(l-1)\Big].
$$

If $\kappa_1<0$, then singularities of 
 type \eqref{one more} do not exist since
$$0\leftarrow g(r)M^{s,l}_\kappa(v,r)\geq g(r)\overline M^{s,l}_\kappa(v,r)\rightarrow 0
$$
as $r\to0$.

If $\kappa_1>0$, then singularities \eqref{one more} cannot be excluded and Theorem \ref{euler} can be applied.

Let us show that, for our values of parameters $\alpha$, $l$, and $s$, the second case takes place only. Indeed, by properties numbers $s<p(\eta)$ and $l<q(\eta)$, we find, see \eqref{add restriction}, 
$$
\kappa_1=(\alpha-1) \Big[\frac l2\Big (1+\frac 3{p(\eta)}+\frac 2{q(\eta)}-1+1- \frac 2{q(\eta)}\Big )-\frac {\alpha+1}2(l-1)\Big] =$$
$$=(\alpha-1)\Big[l\frac {3-2\alpha}4+\frac {\alpha+1}2-\frac l{q(\eta)}\Big]>0.
$$

\setcounter{equation}{0}

\section{The case with axial symmetry}

In this section, it is supposed that the pair $v$	and $q$ is an axisymmetric suitable weak solution to the Navier-Stokes equations in $Q$. It is supposed that axis $x_3$ is the axis of symmetry and $v=v_\varrho e_\varrho+v_\vartheta e_\vartheta+v_3e_3 $ and $v_{\varrho,\vartheta}=v_{\vartheta,\vartheta}=v_{3,\vartheta}=q_{,\vartheta}=0$, 
where $\varrho$, $\vartheta$, and $x_3$ are standard cylindrical coordinates.

Here, as in the paper \cite{Seregin2023-1}, we can make a plausible assumption that,  in addition, 
\begin{equation}
	\label{additional restriction}
	\sup\limits_{0<r<1}\{N^{s_1,l_1}_f(v,r) +A_{f}(v,r)+E_f(v,r)+D_f(q,r)\}= M_1<\infty
\end{equation}
where
$$N^{s_1,l_1}_f(v,r)=\frac {f^{l_1-1}(r)}{r^{l_1}}\int\limits^0_{-r^2}\Big(\int\limits_{B(r)}(|\nabla^2v|+|\partial_tv|)^{s_1}dx\Big)^{\frac {l_1}{s_1}}dt
$$
and numbers $1<l_1,s_1<\infty$ obey the restriction
$$\frac 3{s_1}+\frac 2{l_1}=4.
$$

Now, one can apply Theorem \ref{euler}, replacing in it condition 
\eqref{M1} with condition 
\eqref{additional restriction}. As a result, if the origin is a singular point of $v$ then there is a pair of functions $u$  and $p$ that satisfy the following properties:
\begin{itemize}
	\item 
$$\sup\limits_{a>0}\Big[\sup\limits_{-a^2<\tau<0}\frac {F^2(a)}{a}\int\limits_{B(a)}|u(y,\tau)|^2dy+\frac {F^2(a)}{a^{2}}\int\limits_{Q(a)}|p|^\frac 32dyd\tau+$$
 \begin{equation}
	\label{basicestimates axi}
+\frac {F(a)}{a}\int\limits_{Q(a)}|\nabla u|^2dyd\tau+\end{equation}$$+
\frac {F^{l_1-1}(a)}{a^{l_1}}\int\limits^0_{-a^2}\Big(\int\limits_{B(a)}(|\nabla^2u|+|\partial_\tau u|)^{s_1}dy\Big)^\frac {l_1}{s_1}d\tau\Big]\leq c<\infty;$$
\item
Euler equations \eqref{Euler}; 
\item 
local energy inequality \eqref{enerylocal}; 
\item
 non-triviality condition \eqref{nontrivial}.
\end{itemize}
 In addition, $u$ and $p$ are axially symmetric with the same axis of symmetric and swirl-free,
see  \cite{Seregin2023-1}.

As in the paper \cite{Seregin2023-1}, it is assumed 
that 
$$l_1\leq s_1
$$
and
$$|v(x,t)|\leq \frac c {|x'|f(|x'|)}
$$ for all $x\in B$. Then, using the same arguments (again as in \cite{Seregin2023-1}), we can prove that 
\begin{equation}
	\label{identity}
\int\limits_{Q_-}F(f)(\partial_t\varphi+u\cdot\nabla\varphi)dz=0
\end{equation}
for all test functions $\varphi(|x'|,x_3,t)\in C^\infty_0(Q_-)$, where
$$f=\frac{\omega_\vartheta(u)}\varrho,\qquad F(f)=\Phi(|f|),\qquad \Phi(q)=\frac 2{l_1}q^\frac {l_1}2,
$$ the function $\Phi$ defined for all $q\geq 0$ and 
$$
\omega_\theta(u)=
\omega (u)\cdot e_\theta={\rm rot} \,u\cdot e_\theta =u_{\varrho,3}-u_{3,\varrho}.
$$

From \eqref{identity}, one can derive a conservation law under the following additional condition
\begin{equation}
	\label{additional}
	\frac {a^{2-\frac 32l_1}}{F^\frac {l_1+1}2(a)}\to 0
\end{equation}
 as $a\to \infty$. Indeed, 
 choosing  a cut-off function $\varphi$ in \eqref{identity}, depending on a positive parameter $a$ in a suitable way, and passing to the limit as $a\to\infty$, see similar situation in paper \cite{Seregin2023-1}, we can show that, due to the assumption \eqref{additional}, the integral, containing spatial derivatives of $\varphi$, tends to zero. It leads
 to the conservation law of the form 
\begin{pro}
	\label{minorpropo}
	Assume that  all conditions of the section  hold and, in addition, there exists $t_0\leq 0$ such that 
	\begin{equation}
		\label{assumption}
		g(t_0):=\frac 2{l_1}\int\limits_{\mathbb R^3}\Big(\frac {|\omega_\vartheta(u(x,t_0)|}{r}\Big)^\frac {l_1}2dx<\infty.
	\end{equation}
Then 
\begin{equation}
	\label{preservation}
	g(t):=\frac 2{l_1}\int\limits_{\mathbb R^3}\Big(\frac {|\omega_\vartheta(u(x,t)|}{r}\Big)^\frac {l_1}2dx=g(t_0)
\end{equation}	
for all $t\leq 0$.
\end{pro}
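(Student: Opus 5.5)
The plan is to test identity \eqref{identity} against functions of the form $\varphi(x,t)=\chi(t)\psi_a(x)$. Here $\psi_a(x)=\psi(|x'|/a,x_3/a)$ is an axially symmetric spatial cut-off with $\psi=1$ on $B(1)$ and $\psi=0$ outside $B(2)$. The time factor $\chi$ is a smooth approximation of the indicator of $]t_1,t_2[$ with $t_1<t_2\le 0$; test functions with compact support in $Q_-$ require $t_2<0$, and $t_2=0$ is reached by a limit at the end.

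Writing $h(t)=\int F(f)(x,t)\,dx$ (this is the $g(t)$ of \eqref{preservation}) and $h_a(t)=\int F(f)\psi_a\,dx$, identity \eqref{identity} gives
$$
h_a(t_2)-h_a(t_1)=\int\limits_{t_1}^{t_2}\int\limits_{\mathbb R^3}F(f)\,u\cdot\nabla\psi_a\,dx\,dt
$$
for Lebesgue points $t_1,t_2$. The regularity needed to justify this comes from the $N$-bound in \eqref{basicestimates axi}: $\nabla^2u\in L_{l_1}(L_{s_1})$ locally, with $l_1\le s_1$.

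The main step is to show that the right-hand side tends to zero as $a\to\infty$, uniformly for $t_1,t_2$ in bounded sets. The integrand is supported in the annulus $a\le|x|\le 2a$ and satisfies $|\nabla\psi_a|\le c/a$. I first use $|f|=|\omega_\vartheta|/\varrho\le c|\nabla^2u|$, which holds for swirl-free axisymmetric fields (the standard estimate $|\omega_\vartheta/\varrho|\le c|\nabla\omega|$ near the axis, as used in \cite{Seregin2023-1}). This gives $F(f)\le c|\nabla^2 u|^{l_1/2}$. Applying Hölder in space and time on $Q(2a)$ (or the time slab $]t_1,t_2[\times B(2a)$, which sits inside $Q(2a)$ once $a$ is large), I pair $|\nabla^2u|^{l_1/2}$ in $L_2(L_{2s_1/l_1})$ with $u$ in $L_\infty(L_2)$ and absorb the remaining exponents with powers of $a$. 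The bounds in \eqref{basicestimates axi} give
$$
\int\limits_{Q(2a)}\!|\nabla^2u|^{l_1}\le c\,\frac{a^{l_1+(\cdots)}}{F^{l_1-1}(a)},\qquad \sup_t\int\limits_{B(2a)}\!|u|^2\le c\,\frac{a}{F^2(a)}.
$$
Combining these with the factor $1/a$ and the measure factors, and using \eqref{add assump} to compare $F(2a)$ with $F(a)$, the right-hand side should be bounded by $c\,a^{2-\frac32l_1}/F^{\frac{l_1+1}2}(a)$. By \eqref{additional} this tends to zero. Matching these exponents exactly, using $3/s_1+2/l_1=4$ and $l_1\le s_1$, is where I expect the real work to lie. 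If the pure Hölder count does not close, I would instead interpolate $u$ between the energy bound and $\nabla u\in L_2$.

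Given this, I let $a\to\infty$ in the identity above. Monotone convergence gives $h_a(t)\to h(t)$ in $[0,\infty]$, so $h(t_2)=h(t_1)$ whenever one of them is finite. Starting from $t_0$ with $h(t_0)<\infty$, as in \eqref{assumption}, this shows $h$ is constant for a.a. $t\le0$. The equality extends to every $t\le 0$ in two steps. First, \eqref{identity} shows $F(f)$ is weakly continuous in time, so the limit $t_2\to0$ is justified. Second, lower semicontinuity gives $h\le h(t_0)$ everywhere, and running the same argument with the time interval reversed gives the opposite inequality.
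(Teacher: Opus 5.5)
Your proposal follows the paper's route: test \eqref{identity} with an axisymmetric cut-off $\chi(t)\psi_a(x)$ and use \eqref{additional} to kill the term containing $\nabla\psi_a$. Your H\"older pairing closes exactly, giving $\bigl|\int_{t_1}^{t_2}\int F(f)\,u\cdot\nabla\psi_a\bigr|\le c(t_2-t_1)^{1/2}a^{2-\frac32 l_1}/F^{\frac{l_1+1}2}(a)$. This uses $l_1\le s_1$ in space, the identity $\frac{3l_1}{2s_1}=2l_1-1$, and $|\omega_\vartheta|/\varrho\le|\nabla\omega|\le c|\nabla^2u|$, which for swirl-free fields holds pointwise everywhere, not just near the axis. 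It works only on the fixed slab $]t_1,t_2[\times B(2a)$, though: integrating over all of $]-4a^2,0[$ costs an extra factor $a$ that \eqref{additional} does not cover, and the fallback interpolation is unnecessary.
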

The latter statement could be used for further analysis of ancient solutions to the Euler equations.

\end{document}